\newtheorem{theorem}{Theorem}[section]
\newtheorem{conjecture}[theorem]{Conjecture}
\theoremstyle{definition}
\theoremstyle{remark}
\newtheorem{acknowledgement}{Acknowledgement}
\begin{document}

\large
\title[A short proof of Erd\"{o}s--Straus conjecture for every $n\equiv 13 \textrm{ mod }24$
]{A short proof  of Erd\"{o}s--Straus conjecture for every $n\equiv 13 \textrm{ mod }24$
 }
\thanks{Last updated: March 10, 2021}

\author[M. Gionfriddo]{Mario Gionfriddo}
	\address{Dipartimento di Matematica e Informatica\\
		Viale A. Doria, 6 - 95100 - Catania, Italy}
	\email{gionfriddo@dmi.unict.it} \urladdr{https://www.researchgate.net/profile/Mario\_Gionfriddo}

\author[E. Guardo]{Elena Guardo}
\address{Dipartimento di Matematica e Informatica\\
		Viale A. Doria, 6 - 95100 - Catania, Italy}
	\email{guardo@dmi.unict.it} \urladdr{www.dmi.unict.it/guardo}

\keywords{Conjecture Erd\"{o}--Straus,  Diophantine equation }
\subjclass[2010]{ 11D72, 11D45, 11P81 }

\begin{abstract} The Erd\"{o}s--Straus conjecture states that the equation  $\frac{4}{n}=\frac{1}{x}+\frac{1}{y}+\frac{1}{z}$ has positive integer solutions $x,y,z$ for every postive integers $n\geq 2$. In this short note we find explicity the solutions of the famous conjecture for the case $n\equiv13 \textrm{ mod } 24.$ 
\end{abstract}

\maketitle

\section{Introduction}
In Number Theory there are many unsolvable  problems that still attract a lot of attention. Among all, there is a famous conjecture of Erd\"{o}s--Straus which states that, for all positive integers $n\geq 2$, the rational number $\frac{4}{n}$ can be expressed as the sum of three positive unit fractions. Specifically, in 1948  P. Erd\"{o}s and E. G. Straus formulated the following:  

\begin{conjecture}\label{ES}  For every positive integer $n\geq 2$ there exist positive integers $x,y,z$ such that:
\begin{equation}\label{cong}
\frac{4}{n}=\frac{1}{x}+\frac{1}{y}+\frac{1}{z}.
\end{equation}
\end{conjecture}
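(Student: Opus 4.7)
The plan is to reduce Conjecture \ref{ES} to the case of prime denominators and then dispatch each prime by an explicit identity tailored to its residue class modulo some convenient integer $N$. Since $\frac{4}{mp}=\frac{1}{m}\cdot\frac{4}{p}$, any unit-fraction decomposition of $\frac{4}{p}$ scales to one of $\frac{4}{mp}$ by multiplying each denominator by $m$. Hence it suffices to establish \eqref{cong} for primes $p$; the tiny primes $p=2,3$ are handled by inspection (for instance $\frac{4}{3}=\frac{1}{1}+\frac{1}{4}+\frac{1}{12}$).

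For odd primes I would partition primes by $p\bmod N$ (classically $N=24$ or $N=840$), and for each residue $r$ exhibit $x(p),y(p),z(p)$ as rational functions of $p$ of low degree whose integrality is forced by $p\equiv r\pmod N$ and which satisfy \eqref{cong} as a formal identity. Many residues fall easily: for $p\equiv 3\pmod 4$, writing $p=4k+3$ and observing $\frac{4}{4k+3}=\frac{1}{k+1}+\frac{1}{(k+1)(4k+3)}$, followed by the generic split $\frac{1}{m}=\frac{1}{m+1}+\frac{1}{m(m+1)}$ applied to the second summand, yields a valid three-term decomposition. Analogous \emph{ad hoc} identities exist for several residue classes modulo $24$ (those in which $p+1$, $p+2$, or $p+4$ carries enough small prime factors), and the present paper plugs in the remaining class $p\equiv 13\pmod{24}$.

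The main obstacle --- and the reason Conjecture \ref{ES} has remained open since 1948 --- is that no finite system of bounded-degree polynomial identities covers \emph{every} residue class modulo any fixed $N$. After each attempted parametrisation an infinite, density-zero set of primes (notably certain quadratic residues modulo $840$, for which Mordell's classical list of solvable classes is sharp) resists all known uniform constructions. My plan therefore whittles \eqref{cong} down to this thin residue set, but cannot eliminate it; a genuine proof of the full conjecture would demand a substantially new idea --- perhaps analytic, sieve-theoretic, or probabilistic in flavour --- falling outside the elementary techniques available here. Accordingly, the most the plan can honestly deliver is a reduction to the known hard core of primes, together with the new residue class $p\equiv 13\pmod{24}$ that is the actual subject of the body of the paper.
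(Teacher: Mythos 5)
You have been asked to prove a statement that is an \emph{open conjecture}: neither this paper nor anyone else possesses a proof of Conjecture \ref{ES} in full generality, and the paper itself states explicitly that the case $n\equiv 1 \textrm{ mod } 24$ remains unresolved. Your refusal to claim a complete proof is therefore the correct assessment, and the framework you describe is sound as far as it goes. The multiplicativity observation ($\frac{4}{mp}=\frac{1}{m}\cdot\frac{4}{p}$, so any decomposition of $\frac{4}{p}$ scales to one of $\frac{4}{mp}$) legitimately reduces the problem to primes, and the residue-class strategy of exhibiting explicit polynomial identities is exactly how the paper obtains its partial results: $n$ even, $n\equiv 3 \textrm{ mod } 4$, $n\equiv 3,5 \textrm{ mod } 6$ (which together cover $n\equiv 3,5,7,9,11 \textrm{ mod } 12$), and finally the main theorem's identity
\[
\frac{4}{24k+13}=\frac{1}{2(k+1)(24k+13)}+\frac{1}{2(3k+2)}+\frac{1}{2(k+1)(24k+13)(3k+2)}.
\]
Your two-step construction for $p\equiv 3 \textrm{ mod } 4$ (split off $\frac{1}{k+1}$, then apply $\frac{1}{m}=\frac{1}{m+1}+\frac{1}{m(m+1)}$) is a valid alternative to the paper's $x=y=2k+2$, $z=(k+1)(4k+3)$.

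Two corrections of detail. First, $n\equiv 13 \textrm{ mod } 24$ is not part of the ``hard core'': Mordell's exceptional classes modulo $840$, namely $1,121,169,289,361,529$, are all congruent to $1$ modulo $24$, so \emph{existence} of solutions for $13 \textrm{ mod } 24$ was already known; the paper's contribution (as it acknowledges) is only the explicit closed-form parametrisation. Second, your assertion that Mordell's list is ``sharp'' overstates what is known: those classes are the ones his particular identities fail to cover, and while quadratic-residue obstructions do rule out certain single polynomial identities on classes containing squares, no result establishes that this specific list is the exact residual set for all bounded-degree constructions. With those caveats, your conclusion --- that the elementary identity method proves the paper's actual theorems but cannot prove Conjecture \ref{ES} as stated --- is exactly right, and matches the paper's own admission that the case $n\equiv 1 \textrm{ mod } 24$ is left open.
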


This conjecture has  attracted a lot of attention not only among reseachers in Number Theory but also among many people involved among the different areas in Mathematics, such as L. Bernstein \cite{Be},  M. B. Crawford \cite{Crawford 2019},  M. Di Giovanni,  S. Gallipoli, M. Gionfriddo \cite{DGG}, C. Elsholtz and T. Tao \cite{ET}, J. Guanouchi \cite{Ga1,Ga2}, S. K. Jeswal and S. Chakraverty \cite{JC}, L. J. Mordell \cite{Mo}, D. J. Negash \cite{Ne},  R. Obl\'ath \cite{Ob}, L. A. Rosati \cite{R}, J. W. Sander \cite{Sa}, R. C. Vaughan \cite{V}, K. Yamamoto \cite{Y}, just to cite some of them. For example, Swett in \cite{Sw} has established validity of the conjecture for all $n\leq 10^{14}$ and it also appears in The Penguin Dictionary of Curious and Interesting Numbers, \cite{Pin}, or  S. E. Salez \cite{S} that proved that the conjecture holds for all $n\leq 10^{17}$.

It is not clear if Conjecture \ref{ES} is true or not since there are many papers in which some authors adfirm to have proved that the conjecture is true and others in which the authors proved that it is false (see \cite{Z}). Since we have found some mistakes in some of these published papers, it is our opinion that this conjecture is still open.

L. J. Mordell, in  \cite{Mo}, Chapter 30, Section 1,  has only proven that the conjecture is true for all $n$ except possibly cases in which $n$ is congruent to $1,121,169,289,361,529$ mod $840$, but he did not write explicity the solution as we do in Theorem \ref{mainresult}.
In all the known literature,  for the case $n\not\equiv 1\textrm{ mod } 24$  the authors prove only the existence of the solutions without writing them explicity.

Since it is known that for $n\not\equiv 1\textrm{ mod } 24$ solutions of  (\ref{cong}) can be found constructively and that for the remaining residue class $n\equiv 1\textrm{ mod } 24$ the problem can be reduced to smaller sets with some exceptions (see \cite{Mo} and \cite{Sa}), we have divided the problem into two parts, $n\equiv 1 \textrm{ mod } 24$  and  $n\equiv 13 \textrm{ mod } 24$.  In Section \ref{known}, we recall all the known cases in the literature  and in Section \ref{short} we propose our short proof of the conjecture for the case $n\equiv 13  \textrm{ mod } 24$. Therefore, to completely solve Conjecture \ref{ES}, it remains to find solutions for every $n\equiv 1  \textrm{ mod } 24$.

\begin{acknowledgement}
The authors were supported by  Universit\`{a} degli Studi di Catania, \lq\lq Piano della Ricerca PIACERI 2018/2020 Linea di intervento 2\rq\rq. 
The second author is a member of GNSAGA of INdAM (Italy).
\end{acknowledgement}

\section{Known cases} \label{known}
\subsection{ $n$  even}\label{neven}

The case $n$ even is easily solvable, as we can see in the following theorem.

\begin{theorem}\label{case0mod4} Conjecture \ref{ES}  is true  for $n$ even.
\end{theorem}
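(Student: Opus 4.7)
The plan is to exhibit an explicit solution triple $(x,y,z)$ as a simple function of $n$, rather than proceed by induction or case analysis. Since $n$ is even, I would first set $n = 2m$ with $m \geq 1$, so that equation (\ref{cong}) reduces to $\frac{2}{m} = \frac{1}{x} + \frac{1}{y} + \frac{1}{z}$.

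The key step is the elementary identity $\frac{1}{m+1} + \frac{1}{m(m+1)} = \frac{m+1}{m(m+1)} = \frac{1}{m}$, obtained by putting both fractions over the common denominator $m(m+1)$. Adding $\frac{1}{m}$ to both sides gives
\[
\frac{1}{m} + \frac{1}{m+1} + \frac{1}{m(m+1)} = \frac{2}{m} = \frac{4}{n},
\]
so one can take $x = m = n/2$, $y = m+1 = n/2 + 1$, and $z = m(m+1) = (n/2)(n/2+1)$, all of which are positive integers because $m \geq 1$.

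There is essentially no obstacle to overcome: this is the genuinely easy base case of the Erd\"{o}s--Straus conjecture, and the whole argument consists of one algebraic identity together with the observation that the resulting denominators are positive integers. As an alternative, one could split into the two subcases $n \equiv 0 \pmod{4}$ and $n \equiv 2 \pmod{4}$ and apply the classical decomposition $\frac{1}{k} = \frac{1}{2k} + \frac{1}{3k} + \frac{1}{6k}$ (with $k = n/4$) in the first subcase, but the single identity above dispatches both subcases uniformly, which is why I would prefer it.
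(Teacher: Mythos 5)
Your proposal is correct: with $n=2m$ the identity $\frac{1}{m+1}+\frac{1}{m(m+1)}=\frac{1}{m}$ immediately gives $\frac{1}{m}+\frac{1}{m+1}+\frac{1}{m(m+1)}=\frac{2}{m}=\frac{4}{n}$, and $x=m$, $y=m+1$, $z=m(m+1)$ are positive integers for every even $n\geq 2$, so the verification is complete. Your decomposition is, however, different from the paper's. The paper splits into the two residue classes $n=4k$ and $n=4k+2$ and in both cases uses, in effect, the triple $x=n/2$, $y=z=n$, i.e.\ the decomposition $\frac{4}{n}=\frac{2}{n}+\frac{1}{n}+\frac{1}{n}$, which requires even less algebra but repeats a denominator and is written out as a case analysis that your uniform formula avoids. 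What your route buys is a single formula valid for all even $n$ and a triple with three \emph{distinct} denominators (not demanded by Conjecture \ref{ES}, but a slightly stronger conclusion); what the paper's route buys is that each subcase is an utterly transparent one-line check with denominators built directly from $n$ itself. Either argument fully settles the even case of equation (\ref{cong}).
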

\begin{proof}
Let $n\equiv 0,2 \textrm{ mod } 4$.
\begin{enumerate}
\item If  $n=4k$, then for: $ x=2k, y=4k, z=4k$, it is possible to verify that the conjecture is true.

\item If 4$n=4k+2$, then for: $ x=2k+1, y=4k+2, z=4k+2$, it is possible to verity that the conjecture is true.
\end{enumerate}
\end{proof}

\subsection{$n$  odd}\label{nodd} 

The following results are already known for the case $n$ odd.

\begin{theorem}\label{case3mod4}

Conjecture \ref{ES} is true for $n\equiv 3\textrm{ mod }4$.
\end{theorem}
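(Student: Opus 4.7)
The plan is to give an explicit, fully constructive solution, in the same spirit as Theorem \ref{case0mod4}. Since every $n\equiv 3\textrm{ mod }4$ can be written as $n=4k+3$ for some integer $k\geq 0$, the first step is to fix this parametrization and then exhibit a candidate triple $(x,y,z)$ built from $k$ alone, so that the verification reduces to a direct computation.

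The natural candidate comes from the greedy step. Subtracting $\tfrac{1}{k+1}$ from $\tfrac{4}{4k+3}$ gives
\begin{equation*}
\frac{4}{4k+3}-\frac{1}{k+1}=\frac{4(k+1)-(4k+3)}{(4k+3)(k+1)}=\frac{1}{(k+1)(4k+3)},
\end{equation*}
which is a single unit fraction, not two. The obvious remedy is to split it as the sum of two equal halves $\tfrac{1}{2(k+1)(4k+3)}+\tfrac{1}{2(k+1)(4k+3)}$. So I would propose the explicit triple
\begin{equation*}
x=k+1,\qquad y=z=2(k+1)(4k+3),
\end{equation*}
both of which are positive integers provided $k\geq 0$, i.e.\ whenever $n\geq 3$. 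The proof would then amount to checking that $\tfrac{1}{x}+\tfrac{1}{y}+\tfrac{1}{z}=\tfrac{4}{n}$ by direct substitution, which follows from the identity above together with the trivial splitting.

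There is essentially no obstacle in this argument, and indeed this is why the residue class $n\equiv 3\textrm{ mod }4$ is considered one of the easy cases: a linear parametrization of $n$ yields a closed-form solution triple. The only point I would be careful about is to state the range of $k$ correctly (so that $n\geq 2$ is covered and all three denominators are positive) and to sanity-check a small example such as $n=3$, where the formula gives $(x,y,z)=(1,6,6)$ and indeed $1+\tfrac{1}{6}+\tfrac{1}{6}=\tfrac{4}{3}$, and $n=7$, giving $(2,28,28)$ with $\tfrac{1}{2}+\tfrac{1}{28}+\tfrac{1}{28}=\tfrac{4}{7}$. These sanity checks confirm that the construction handles the base case and the general case uniformly, so no separate subdivision (for instance into $n\equiv 3,7,11,15\textrm{ mod }16$) is needed.
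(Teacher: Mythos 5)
Your construction is correct and rests on the same identity as the paper, namely $\frac{4}{4k+3}=\frac{1}{k+1}+\frac{1}{(k+1)(4k+3)}$ followed by splitting one of the two fractions into equal halves; the paper halves the first term, taking $x=y=2k+2$, $z=(k+1)(4k+3)$, while you halve the second, taking $x=k+1$, $y=z=2(k+1)(4k+3)$. This is only a cosmetic difference, so your proof is essentially the paper's.
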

\begin{proof}
If  $n=4k+3$, then for: $x=2k+2, y=2k+2, z=(k+1)(4k+3)$, it is possible to verity that the conjecture is true (see \cite{CDGG}).	
\end{proof}

\begin{theorem}\label{case3mod6}
Conjecture \ref{ES} is true  for $n\equiv 3,5 \textrm{ mod } 6$.
\end {theorem}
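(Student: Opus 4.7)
The plan is to mimic the style of Theorems~\ref{case0mod4} and~\ref{case3mod4}: for each of the two residue classes, I would guess an explicit triple $(x,y,z)$ and verify the identity by clearing denominators. The guesses are dictated by how the prime $3$ divides $n$ or $n+1$, so the two subcases are handled separately.

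For $n\equiv 3 \pmod 6$, I would write $n=3(2k+1)$ so that $n/3=2k+1$ is an odd positive integer. The starting point is the trivial decomposition $\frac{4}{3(2k+1)}=\frac{1}{2k+1}+\frac{1}{3(2k+1)}$ coming from $\frac{4}{3}=1+\frac{1}{3}$, and the second term is then doubled via $\frac{1}{3(2k+1)}=\frac{1}{6(2k+1)}+\frac{1}{6(2k+1)}$. This yields the candidate triple $x=2k+1$, $y=z=2n$, whose correctness reduces to a one-line verification.

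For $n\equiv 5 \pmod 6$, the key observation is that $n\equiv 2\pmod 3$, so $n+1$ is divisible by $3$; I write $n+1=3m$ with $m=2k+2$. I would then invoke the telescoping identity $\frac{3}{n}-\frac{3}{n+1}=\frac{3}{n(n+1)}$, which after substituting $n+1=3m$ collapses to $\frac{3}{n}=\frac{1}{m}+\frac{1}{nm}$. Adding $\frac{1}{n}$ to both sides gives the candidate $x=n$, $y=m=(n+1)/3$, $z=nm=n(n+1)/3$; the verification reduces to the arithmetic fact $n+1=3m$.

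The only nontrivial step is deciding which integer to split around: $n$ itself in the first case and $n+1$ in the second. I do not expect any genuine obstacle, since once the right initial guess is made each case is just a check that $x,y,z$ are positive integers and that the unit fractions sum to $4/n$; positivity is obvious from the explicit formulas and the congruence assumption on $n$.
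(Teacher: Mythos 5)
Your proposal is correct, and in the case $n\equiv 5 \pmod 6$ it produces exactly the paper's triple: writing $n=6k+5$, your $x=n$, $y=(n+1)/3=2k+2$, $z=n(n+1)/3=(6k+5)(2k+2)$ are precisely the values used in the paper, only derived there by direct verification rather than via your telescoping identity $\tfrac{3}{n}-\tfrac{3}{n+1}=\tfrac{3}{n(n+1)}$. In the case $n\equiv 3\pmod 6$ your triple differs from the paper's: with $n=6k+3$ you take $\bigl(n/3,\,2n,\,2n\bigr)=(2k+1,\,12k+6,\,12k+6)$, obtained by writing $\tfrac{4}{n}=\tfrac{3}{n}+\tfrac{1}{n}$ and halving $\tfrac{1}{n}$, whereas the paper takes $x=n=6k+3$, $y=2k+2$, $z=(2k+1)(2k+2)$, which amounts to keeping $\tfrac{1}{n}$ intact and splitting $\tfrac{3}{n}=\tfrac{1}{2k+1}$ by the classical identity $\tfrac{1}{a}=\tfrac{1}{a+1}+\tfrac{1}{a(a+1)}$. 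Both decompositions verify in one line; yours is marginally simpler since it only uses $3\mid n$ and a repeated denominator (which the conjecture allows, and which the paper itself uses in the even case), while the paper's has the cosmetic feature that the three denominators are distinct. There is no gap in your argument: positivity and integrality of your $x,y,z$ follow immediately from $3\mid n$ in the first case and $3\mid n+1$ in the second.
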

\begin{proof}  Let $n\equiv 3,5 \textrm{ mod } 6$ (see \cite{CDGG}).

If  $n=6k+3$, then for: $x=6k+3, y=2k+2, z=(2k+1)(2k+2)$,  it is possible to verify that the conjecture is true.
If $ n=6k+5$, then for:  $x=6k+5, y=2k+2, z=(6k+5)(2k+2)$, it is possible to verify that the conjecture is true. 	
\end{proof}

Collecting together the results proved in Theorems \ref{case3mod4} and  \ref{case3mod6}, we have that: 
 
\begin{theorem}\label{case3mod13}\cite{CDGG} 
Conjecture \ref{ES} is true  for every $ n\equiv 3,5,7,9,11   \textrm{ mod } 12$.
\end{theorem}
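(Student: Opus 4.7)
The plan is to observe that Theorem \ref{case3mod13} is not an independent result but follows immediately by combining the residue classes covered in Theorems \ref{case3mod4} and \ref{case3mod6}; the whole proof is essentially a bookkeeping check modulo $12$.

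First, I would unwind the residue classes mod $12$ corresponding to the two hypotheses available. The condition $n\equiv 3\ \textrm{mod}\ 4$ unpacks, for odd $n$, to $n\equiv 3,7,11\ \textrm{mod}\ 12$, so Theorem \ref{case3mod4} already resolves three of the five target classes. Similarly, $n\equiv 3\ \textrm{mod}\ 6$ unpacks to $n\equiv 3,9\ \textrm{mod}\ 12$ and $n\equiv 5\ \textrm{mod}\ 6$ unpacks to $n\equiv 5,11\ \textrm{mod}\ 12$, so Theorem \ref{case3mod6} resolves the classes $3,5,9,11\ \textrm{mod}\ 12$.

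Taking the union of these two collections gives exactly $\{3,5,7,9,11\}\ \textrm{mod}\ 12$, which is the desired statement. For each individual $n$, one could, if explicit witnesses are wanted, simply quote the formula from whichever of the two earlier proofs applies: the triple $(2k+2,\,2k+2,\,(k+1)(4k+3))$ from Theorem \ref{case3mod4} when $n=4k+3$, and the triples from Theorem \ref{case3mod6} when $n=6k+3$ or $n=6k+5$.

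There is no real obstacle here: the argument is purely a covering check, and the only thing to be careful about is making sure no odd residue mod $12$ other than $1$ is missed. The residue $n\equiv 1\ \textrm{mod}\ 12$ is precisely the case left open (and in fact splits further into $n\equiv 1,13\ \textrm{mod}\ 24$, which is why the rest of the paper concentrates on the class $n\equiv 13\ \textrm{mod}\ 24$). Thus the proof reduces to writing \emph{``Immediate from Theorems \ref{case3mod4} and \ref{case3mod6} by reduction modulo $12$.''}
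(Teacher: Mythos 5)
Your proposal is correct and is essentially the paper's own argument: the paper likewise proves the statement by checking, residue class by residue class mod $12$, that each of $3,5,7,9,11$ falls under Theorem \ref{case3mod4} ($n\equiv 3\ \textrm{mod}\ 4$) or Theorem \ref{case3mod6} ($n\equiv 3,5\ \textrm{mod}\ 6$); you merely run the same covering check in the opposite direction, expanding the covered classes into residues mod $12$ and taking the union. Your bookkeeping is accurate, including the observation that only $n\equiv 1\ \textrm{mod}\ 12$ remains open.
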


\begin{proof}  Indeed:
\begin{enumerate}
\item if  $n\equiv 3\textrm{ mod }12$, then it is $n\equiv 3\textrm{ mod }$ and also $n\equiv 3 \textrm{ mod } 6$;
\item if  $n\equiv 5 \textrm{ mod } 12$, then  it is $n\equiv 1\textrm{ mod } 4$, but also $n\equiv 5  \textrm{ mod } 6$;
\item if  $n\equiv 7\textrm{ mod } 12$,  then it is $n\equiv1\textrm{ mod }6$, but also $n\equiv 3  \textrm{ mod } 4$;
\item if $n\equiv 9 \textrm{ mod }12$, then  it is $n\equiv 1 \textrm{ mod } 4$, but also $n\equiv 3  \textrm{ mod } 6$;
\item if $n\equiv 11\textrm{ mod } 12$, then it is $n\equiv 3\textrm{ mod } 4$ and also $n\equiv 5 \textrm{ mod } 6$.
\end{enumerate}
\end{proof}

Therefore, it remains to examine the case $n\equiv 1 \textrm{ mod } 12$.  We have divided the problem into two parts, $n\equiv 1 \textrm{ mod } 24$  and  $n\equiv 13 \textrm{ mod } 24$, and get a very short proof of the conjecture for the case $n\equiv 13  \textrm{ mod } 24$.

\section{Main Result: Solutions for $n\equiv 13\textrm{ mod } 24$.}\label{short} 


\begin{theorem}\label{mainresult}  Conjecture \ref{ES} is true for every $n\equiv13  \textrm{ mod } 24$.
\end{theorem}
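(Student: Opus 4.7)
The plan is to exhibit an explicit triple $(x,y,z)$ as a function of $k$, where $n = 24k+13$. The guiding principle is to choose $x$ so that $\frac{4}{n} - \frac{1}{x}$ has a very small numerator, after which the resulting fraction can be split as a sum of two unit fractions almost by inspection.

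First I would look for $x$ of the form $x = (n+r)/4$ with $r$ as small a positive integer as possible and $(n+r)/4 \in \mathbb{Z}_{>0}$, since then
\[
\frac{4}{n} - \frac{1}{x} \;=\; \frac{4x - n}{nx} \;=\; \frac{r}{nx}.
\]
The divisibility condition $4 \mid n+r$, together with $n \equiv 1 \pmod 4$, forces $r \equiv 3 \pmod 4$, so the smallest admissible choice is $r = 3$. With $n = 24k+13$ this gives the integer
\[
x \;=\; \frac{n+3}{4} \;=\; \frac{24k+16}{4} \;=\; 2(3k+2),
\]
and a direct computation yields $\frac{4}{n} - \frac{1}{x} = \frac{3}{nx} = \frac{3}{2n(3k+2)}$.

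The final step is the elementary identity $\frac{3}{2m} = \frac{1}{m} + \frac{1}{2m}$, applied with $m = n(3k+2)$. This produces the candidates
\[
y \;=\; n(3k+2) \;=\; (24k+13)(3k+2), \qquad z \;=\; 2n(3k+2).
\]
Checking that $\frac{1}{x}+\frac{1}{y}+\frac{1}{z} = \frac{4}{n}$ then reduces to clearing denominators, and positivity of $x$, $y$, $z$ for every $k \geq 0$ is immediate since $3k+2 \geq 2$ and $n \geq 13$. There is essentially no obstacle once the right $x$ is guessed: the only delicate point is the initial choice of $r=3$, which is dictated by the congruence $n \equiv 13 \pmod{24}$, and everything else is mechanical verification.
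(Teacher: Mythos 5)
Your proposal is correct: with $n=24k+13$, the triple $x=2(3k+2)$, $y=(24k+13)(3k+2)$, $z=2(24k+13)(3k+2)$ gives $\frac{1}{x}+\frac{1}{y}+\frac{1}{z}=\frac{n+3}{2n(3k+2)}=\frac{8(3k+2)}{2n(3k+2)}=\frac{4}{n}$, and all three entries are positive integers for every $k\geq 0$. Your route coincides with the paper's only in the first fraction: the paper also uses $\frac{1}{2(3k+2)}$, but it splits the remainder $\frac{4}{n}-\frac{1}{2(3k+2)}=\frac{3}{2n(3k+2)}$ as $\frac{1}{2(k+1)n}+\frac{1}{2(k+1)n(3k+2)}$ (amplifying by $k+1$ and writing $3k+3=(3k+2)+1$), which yields the triple $\bigl(2(k+1)n,\;2(3k+2),\;2(k+1)n(3k+2)\bigr)$, whereas you split it with the simpler identity $\frac{3}{2m}=\frac{1}{m}+\frac{1}{2m}$ with $m=n(3k+2)$. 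The two decompositions agree for $k=0$ but differ for $k\geq 1$ (for $n=37$ the paper gives $\{10,148,740\}$, you give $\{10,185,370\}$). What your version buys is a derivation rather than a bare verification: the greedy choice $x=\frac{n+3}{4}$, forced by $n\equiv 1 \pmod 4$, explains where the construction comes from and produces somewhat smaller denominators; the paper simply states its triple and verifies it by clearing denominators. Both arguments are equally valid and equally elementary.
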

\begin{proof}:  Let $n=24k+13$, for any non negative integer $k$. If:
\begin{eqnarray*}
x&=& (2(k+1)(24k+13)),\\
y&=& (2(3k+2)),\\
z&=&(2(k+1)(24k+13)(3k+2)),
\end{eqnarray*}
\noindent 
then it follows:
{ \Large{
\begin{eqnarray*}
&\frac{1}{2(k+1)(24k+13)} +\frac{1}{2(3k+2)} +\frac{1}{2(k+1)(24k+13)(3k+2)} &=\\
&=\frac{(3k+2)+(k+1)(24k+13)+1}{2(k+1)(24k+13)(3k+2)} &=\\
&=\frac{8(3k^2+5k+2)}{2(k+1)(24k+13)(3k+2)}&=\\
&=\frac{8(3k+2)(k+1)}{2(k+1)(24k+13)(3k+2)}&=\\
&=\frac{4}{(24k+13)}
\end{eqnarray*}}}

\noindent which proves the statement.
\end{proof}

\end{document}